\definecolor{darkblue}{rgb}{0.13,0.13,0.39}
\newtheorem{thm}{Theorem}
\newtheorem{lem}{Lemma}[section]
\newtheorem{prop}[lem]{Proposition}
\newtheorem{cor}[lem]{Corollary}
\theoremstyle{definition}
\newtheorem{rem}[lem]{Remark}
\newtheorem*{rem*}{Remark}
\newtheorem{defn}[lem]{Definition}
\newcommand{\lc}{\ell}
\newcommand{\I}{{\rm i}}
\newcommand{\pp}{\mathbb{P}}
\newcommand{\ee}{\mathbb{E}}
\newcommand{\rr}{\mathbb{R}}
\newcommand{\cc}{{\mathbb{C}}}
\newcommand{\uno}[1]{\mathbf{1}_{#1}}
\newcommand{\wt}{\widetilde}
\newcommand{\qand}{\quad\text{and}\quad}
\newcommand{\qqand}{\qquad\text{and}\qquad}
\DeclareMathOperator{\im}{Im}
\DeclareMathOperator{\var}{var}
\newcommand{\e}[0]{\epsilon}
\newcommand{\EE}{\ensuremath{\mathbb{E}}}
\newcommand{\PP}{\ensuremath{\mathbb{P}}}
\newcommand{\R}{\ensuremath{\mathbb{R}}}
\newcommand{\C}{\ensuremath{\mathbb{C}}}
\newcommand{\Z}{\ensuremath{\mathbb{Z}}}
\newcommand{\Real}{\ensuremath{\mathrm{Re}}}
\newcommand{\bfone}{\ensuremath{\mathbf{1}}}
\newcommand\thetarc{\gamma}
\newcommand{\Res}[1]{\underset{{#1}}{\mathrm{Res}}}
\numberwithin{equation}{section}
\let\oldmarginpar\marginpar
\renewcommand\marginpar[1]{\-\oldmarginpar[\raggedleft\footnotesize #1]%
{\raggedright{\small\textsf{#1}}}}
\begin{document}

\title{Log-Gamma polymer free energy fluctuations via a Fredholm determinant identity}

\author[A. Borodin]{Alexei Borodin}
\address[A. Borodin]{
Massachusetts Institute of Technology,
Department of Mathematics,
77 Massachusetts Avenue, Cambridge, MA 02139-4307, USA}
\email{borodin@math.mit.edu}

\author[I. Corwin]{Ivan Corwin}
\address[I. Corwin]{
Microsoft Research,
New England, 1 Memorial Drive, Cambridge, MA 02142, USA}
\email{ivan.corwin@gmail.com}

\author[D. Remenik]{Daniel Remenik}
\address[D. Remenik]{
  Department of Mathematics\\
  University of Toronto\\
  40 St. George Street\\
  Toronto, Ontario\\
  Canada M5S 2E4 \newline \indent\textup{and}\indent
  Departamento de Ingenier\'ia Matem\'atica\\
  Universidad de Chile\\
  Av. Blanco Encala\-da 2120\\
  Santiago\\
  Chile} \email{dremenik@math.toronto.edu}

\begin{abstract}
  We prove that under $n^{1/3}$ scaling, the limiting distribution as $n\to \infty$ of the
  free energy of Sepp\"{a}l\"{a}inen's log-Gamma discrete directed polymer is GUE
  Tracy-Widom. The main technical innovation we provide is a general identity between a
  class of $n$-fold contour integrals and a class of Fredholm determinants. Applying this
  identity to the integral formula proved in \cite{cosz} for the Laplace transform of the
  log-Gamma polymer partition function, we arrive at a Fredholm determinant which lends
  itself to asymptotic analysis (and thus yields the free energy limit theorem). The Fredholm determinant was anticipated in \cite{borCor} via the formalism of Macdonald processes yet its rigorous proof was so far lacking because of the nontriviality of certain decay estimates required by that approach.
\end{abstract}

\maketitle

\section{Introduction and main results}

The log-Gamma polymer was introduced and studied by Sepp\"al\"ainen \cite{seppPolymerBoundary}.

\begin{defn} Let $\theta$ be a positive real.
A random variable $X$ has {\it inverse-Gamma distribution with parameter $\theta>0$}  if it is supported on the positive reals
where it has distribution
\begin{equation}\label{invgammadensity}
\PP(X\in dx) = \frac{1}{\Gamma(\theta)} x^{-\theta-1}\exp\left\{-\frac{1}{x}\right\} dx.
\end{equation}
We abbreviate this $X\sim \Gamma^{-1}(\theta)$.
\end{defn}

\begin{defn}
The {\it log-Gamma polymer partition function} with parameter $\gamma>0$ is given by
\begin{equation}
Z(n,N) = \sum_{\pi:(1,1)\to (n,N)} \prod_{(i,j)\in \pi} d_{i,j}
\end{equation}
where $\pi$ is an up/right directed lattice path from the Euclidean point $(1,1)$ to $(n,N)$ and where $d_{i,j}\sim \Gamma^{-1}(\gamma)$.
\end{defn}

In \cite{seppPolymerBoundary} it was proved that
\begin{equation}
\lim_{n\to \infty} \frac{\log Z(n,n)}{n} = \bar{f}_{\gamma}, \qquad \limsup_{n\to \infty} \frac{\var \log Z(n,n)}{n^{2/3}} \leq C
\end{equation}
where $\bar{f}_{\gamma} = -2\Psi(\gamma/2)$ and $C$ is a large constant. Here
$\Psi(x)=[\log \Gamma]'(x)$ is the digamma function. The scale of the variance upper-bound is believed
to be tight, since directed polymers at positive temperature should have KPZ universality
class scalings (see e.g. the review \cite{corwinReview}). Moreover, it is believed that, when
centered by $n \bar{f}_{\gamma}$ and scaled by $n^{1/3}$, the distribution of the free
energy $\log Z(n,n)$ should limit to the GUE Tracy-Widom distribution \cite{tracyWidom}.

We presently prove this form of KPZ universality for the log-Gamma polymer for
$\gamma<\gamma^*$ for some $\gamma^*>0$. This assumption is purely technical and comes
from the asymptotic analysis. It is likely that this assumption can be removed following
the approach of \cite{borodinCorwinFerrari}, where a similar assumption was removed in the
case of the semi-discrete polymer. For this model $\gamma$ plays a role akin to
temperature.

\begin{thm}\label{thm:fluct}
There exists $\gamma^*>0$ such that the log-Gamma polymer free energy with parameter $\gamma\in (0,\gamma^*)$ has limiting fluctuation distribution given by
\begin{equation}
\lim_{n\to \infty} \PP\left(\frac{ \log Z(n,n) - n\bar{f}_{\gamma}}{n^{1/3}}\leq r\right) = F_{{\rm GUE}}\left(\left(\frac{\bar{g}_{\gamma}}{2}\right)^{-1/3} r\right)
\end{equation}
where $\bar{f}_{\gamma} = -2\Psi(\gamma/2)$, $\bar{g}_{\gamma} = -2 \Psi''(\gamma/2)$ and $F_{{\rm GUE}}$ is the GUE Tracy-Widom distribution function.
\end{thm}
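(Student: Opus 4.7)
The plan is to reduce the theorem, via the paper's main technical identity, to a steepest descent analysis of a Fredholm determinant. I would start from the $n$-fold Mellin--Barnes contour integral formula of \cite{cosz} for the Laplace transform $\EE[e^{-u Z(n,n)}]$, whose integrand is built from ratios of Gamma functions. Applying the contour-integral-to-Fredholm-determinant identity announced in the abstract, this becomes $\det(I - K_u)_{L^2(C)}$ for some explicit kernel $K_u$ on a vertical contour $C$, with $K_u$ involving a Gamma-function ratio, an exponential weight depending on $u$, and a Cauchy-type resolvent factor.

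Next I would exploit the step-function approximation $\exp(-e^{kx}) \to \uno{x \leq 0}$ as $k \to \infty$. Setting
\begin{equation}
u = u_n(r) := \exp(-n\bar{f}_\gamma - n^{1/3} r) \qand Y_n := \frac{\log Z(n,n) - n\bar f_\gamma}{n^{1/3}},
\end{equation}
one has $\exp(-u_n(r) Z(n,n)) = \exp(-e^{n^{1/3}(Y_n - r)})$. Provided uniform-in-$n$ right-tail bounds on $Y_n$ are available, the standard equivalence between Laplace-transform and CDF convergence for tight families gives
\begin{equation}
\lim_{n \to \infty} \EE[\exp(-u_n(r) Z(n,n))] = \lim_{n \to \infty} \PP(Y_n \leq r).
\end{equation}
The theorem is therefore reduced to computing $\lim_n \det(I - K_{u_n(r)})_{L^2(C_n)}$ on a suitable $n$-dependent contour $C_n$, and identifying the limit with $F_{\mathrm{GUE}}((\bar{g}_\gamma/2)^{-1/3} r)$.

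For the third step (the asymptotics), I would write the exponential part of the kernel as $e^{n \Phi(\theta; r)}$, where $\Phi$ combines $\log\Gamma$ differences with a linear term from $\log u_n(r)$. The natural critical point is $\theta_c = \gamma/2$: the condition $\Phi'(\theta_c) = 0$ fixes $\bar{f}_\gamma = -2\Psi(\gamma/2)$, and by the odd symmetry of the $\log\Gamma$ part around $\gamma/2$ one also has $\Phi''(\theta_c) = 0$ automatically, so the leading nonlinear contribution is cubic, with coefficient $\Phi'''(\theta_c) \propto -2\Psi''(\gamma/2) = \bar g_\gamma$. I would then deform $C$ to pass through $\theta_c$ along a steepest descent direction, rescale $\theta = \theta_c + n^{-1/3} (\bar g_\gamma/2)^{-1/3} z$, and verify that the rescaled kernel converges, pointwise and with sufficient uniformity, to the Airy kernel on $L^2(r,\infty)$ in its cubic contour-integral form. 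Hadamard bounds on the Fredholm series then yield $\det(I - K_{u_n(r)}) \to F_{\mathrm{GUE}}((\bar{g}_\gamma/2)^{-1/3} r)$.

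The main obstacle, and the source of the restriction $\gamma < \gamma^*$, is the control of tails, of two separate kinds. First, one needs a uniform right-tail bound $\PP(Y_n \geq R) \to 0$ as $R \to \infty$ (uniformly in $n$) to pass from Laplace-transform convergence to CDF convergence. Second, and more delicately, the deformed contour must be chosen so that $\Real \Phi(\theta;r)$ is strictly less than its value at $\theta_c$ along the entire length of $C_n$, so that contributions away from $\theta_c$ are exponentially suppressed, while simultaneously the Gamma-function ratios remain integrable there. For small $\gamma$ the geometry of $\log\Gamma$ in the complex plane is benign enough that a nearly vertical contour suffices; for larger $\gamma$ one needs the more intricate deformations used in \cite{borodinCorwinFerrari} for the semi-discrete polymer, which is why the statement here is restricted to an interval $(0,\gamma^*)$.
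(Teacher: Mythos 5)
Your proposal follows essentially the same route as the paper: reduce to the Fredholm determinant via the contour-integral identity applied to the \cite{cosz} formula, pass from the Laplace transform to the CDF via the step-function approximation $e^{-e^{n^{1/3}x}}\to \uno{x\leq 0}$, then steepest descent through the double critical point $\gamma/2$ where $G'=G''=0$ and $G'''=-\bar g_\gamma$, rescaling by $n^{-1/3}(\bar g_\gamma/2)^{-1/3}$ to land on the Airy/Tracy--Widom kernel. So the architecture is right.

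There is, however, one genuine misconception worth flagging. You write that to pass from Laplace-transform convergence to CDF convergence you need ``uniform-in-$n$ right-tail bounds on $Y_n$'' as a separate ingredient, and you list this as one of the two sources of the restriction $\gamma<\gamma^*$. Neither is correct. The approximants $f_n(x)=e^{-e^{n^{1/3}x}}$ are monotone decreasing from $1$ to $0$ and converge uniformly to $\uno{x\leq 0}$ off any neighborhood of $0$; given that, the elementary squeeze
\[
 f_n(-\delta)\,\PP(Y_n\leq r-\delta)\;\leq\;\EE[f_n(Y_n-r)]\;\leq\;\PP(Y_n\leq r+\delta)+f_n(\delta)
\]
together with continuity of the limit $p(r)=F_{\mathrm{GUE}}((\bar g_\gamma/2)^{-1/3}r)$ already yields both tightness and $\PP(Y_n\leq r)\to p(r)$, with no a priori tail estimate on $Y_n$. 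This is exactly the content of Lemma \ref{problemma1} (Lemma 4.1.38 of \cite{borCor}), which the paper invokes for this step. Consequently, the restriction $\gamma<\gamma^*$ is \emph{entirely} analytic in origin: it comes from being able to exhibit, for $\gamma$ small, explicit steep-descent contours for $G$ on which $\Real(G(v)-G(\bar t_\gamma))$ decays and which thread the poles of the Gamma ratios and of $1/\sin(\pi(v-w))$ without crossing anything --- the estimates of Lemmas \ref{lem:smallball}, \ref{lem:circ}, \ref{lem:line} are made by scaling $z=\gamma\tilde z$ and comparing $G$ to a $\gamma$-independent model function $f(\tilde z)$ with error $O(\gamma^2)$, a comparison that is only under control for small $\gamma$. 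So your ``second kind'' of obstruction is correct; the ``first kind'' is a phantom, and citing it as a source of $\gamma^*$ is a misattribution you should repair.
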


We give the proof of this theorem in Section \ref{sec:Asym}. There are two ingredients in
the proof, and then some asymptotic analysis. The first ingredient is the $n$-fold
integral formula given in \cite{cosz} for the Laplace transform of the polymer partition
function. This is given below as Proposition \ref{COSZform}. The
second ingredient in the proof is a general identity between a class of $n$-fold contour
integrals and a class of Fredholm determinants. This is given below as Theorem
\ref{thm:equiv}. Applying this identity to Proposition \ref{COSZform} yields Corollary
\ref{cor:COSZdet} which is a new Fredholm determinant expression for the Laplace transform
of the log-Gamma polymer partition function. This formula lends itself to straightforward
asymptotic analysis, as is done in Section \ref{sec:Asym}.

The log-Gamma polymer may be generalized, as done in \cite{cosz}, so that the distributions of the $\thetarc_{i,j}$
depend on two collections of parameters. 

Discrete directed polymer partition functions, under {\it intermediate disorder} scaling
\cite{AKQ,QRMF}, converge to the solution of the multiplicative stochastic heat equation
(whose logarithm is the KPZ equation). If the two collections of parameters determining
the distributions of the $\thetarc_{i,j}$ are tuned correctly, then the initial data for
the limiting stochastic heat equation is determined by two collections of parameters as
well. The Fredholm determinant formula of Corollary \ref{cor:COSZdet} should limit to an
analogous formula for the Laplace transform of the stochastic heat equation with this
general class of initial data which would be a finite temperature analog of the results of \cite{BP07}. When only one of the collections of parameters is tuned,
this formula was computed in \cite{borodinCorwinFerrari} via a similar limit of the
Fredholm determinant formula for the Laplace transform of the semi-discrete polymer
partition function (see also \cite{CQ} in the case where only a single parameter is
tuned), and this is a finite temperature analog of the results of \cite{BBP}.


\begin{defn}
The {\it Sklyanin measure} $s_N$ on $\cc^N$ is given by
\begin{equation}\label{eq:skly}
  s_N(dw_1,\dotsc,dw_N)=\frac1{(2\pi\I)^NN!}\prod_{\substack{i,j=1\\i\neq j}}^N
  \frac1{\Gamma(w_i-w_j)} \prod_{i=1}^{N} dw_i.
\end{equation}
\end{defn}

The following result is taken from \cite{cosz}, Theorem 3.8.ii.
\begin{prop}\label{COSZform}
  Fix $n\geq N$, and choose parameters $\alpha_i>0$ for $1\leq i\leq n$ and $a_j>0$ for
  $1\leq i\leq N$ such that $\thetarc_{i,j}=\alpha_i-a_j>0$. Consider the log-Gamma
  polymer partition function where $d_{i,j}\sim \Gamma^{-1}(\thetarc_{i,j})$. Then for all
  $u$ with $\Real(u)>0$
\begin{equation}
\EE\left[ e^{-u Z(n,N)}\right] = \int_{(\I \R)^N} s_N(dw_1,\ldots, dw_N) \prod_{i,j=1}^{N} \Gamma(a_j-w_i) \prod_{j=1}^{N} \frac{F(w_j)}{F(a_j)},
\end{equation}
where
\begin{equation}\label{eqn:Fw}
F(w) = u^w \prod_{m=1}^{n} \Gamma(\alpha_m - w).
\end{equation}
\end{prop}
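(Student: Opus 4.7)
The plan, following \cite{cosz}, is to realize $Z(n,N)$ as an observable of a Whittaker measure on $\R^N$ and then to compute its Laplace transform via the Mellin-Barnes integral representation of the $\mathfrak{gl}_N$ class-one Whittaker function.

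First I would apply the geometric (tropical) RSK correspondence to the matrix $(d_{i,j})_{1\le i\le n,\,1\le j\le N}$ of inverse-Gamma weights. The output has a ``shape'' vector $(t_1,\ldots,t_N)\in\R^N$ for which $Z(n,N)=e^{t_1}$. Under the product inverse-Gamma law, the joint density of $(t_1,\ldots,t_N)$ is given explicitly, up to the normalization $\prod_{i,j}\Gamma(\alpha_i-a_j)^{-1}$, by the product of a Whittaker function $\psi^{(N)}_a(t)$ with a second factor built from $(\alpha_m)_{m=1}^n$; this is the log-Gamma Whittaker measure established in \cite{cosz} (building on the interpretations of O'Connell and Sepp\"al\"ainen-Valk\'o-Zygouras).

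Next I would substitute the Givental/Kharchev-Lebedev Mellin-Barnes representation
\[
\psi^{(N)}_a(t)=\int_{(\I\R)^N} s_N(dw)\,\prod_{i,j=1}^N\Gamma(a_j-w_i)\,e^{H(t,w)},
\]
where $H(t,w)$ is an explicit real linear form, into $\EE[e^{-uZ(n,N)}]=\int_{\R^N}e^{-ue^{t_1}}\,P(t\in dt)$. Exchanging the order of integration, the integral over $t$ separates into one-dimensional integrals $\int_\R e^{-ue^{t_i}}e^{c_i t_i}\,dt_i=u^{-c_i}\Gamma(c_i)$, whose product together with the $\alpha$-dependent factor assembles into $\prod_j F(w_j)$ with $F(w)=u^w\prod_{m=1}^n\Gamma(\alpha_m-w)$. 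The overall normalization $\prod_{i,j}\Gamma(\alpha_i-a_j)^{-1}$ then recombines into $\prod_j F(a_j)^{-1}$, yielding the stated formula.

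The main obstacle is analytic: justifying the interchange of the $t$-integration with the Sklyanin contour integrals, and verifying that the vertical contours $(\I\R)^N$ are placed so that no Gamma-function poles are crossed. The hypothesis $\Real(u)>0$ supplies the decay needed to carry out the $t$-integrals on the imaginary axes, while the positivity $\alpha_i-a_j>0$ keeps the poles of both $\Gamma(a_j-w_i)$ and the Gamma factors inside $F(w_j)$ on the correct side. A secondary delicate point is pinning down the precise Whittaker-Plancherel (Bump-Stade-type) identity that produces exactly the factor $\prod_{i,j}\Gamma(a_j-w_i)$ paired with the Sklyanin measure $s_N$; once this identity is in hand the assembly into the claimed integrand is essentially bookkeeping.
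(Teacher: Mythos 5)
The paper does not give its own proof of Proposition \ref{COSZform}: it is imported verbatim from \cite{cosz} (Theorem 3.8.ii), so there is no internal argument to compare against. Your outline is a fair high-level account of the strategy actually used in \cite{cosz}: geometric (tropical) RSK sends the inverse-Gamma weight matrix to a shape $(t_1,\dotsc,t_N)$ with $Z(n,N)=e^{t_1}$, the law of the shape is the log-Gamma Whittaker measure, and the Laplace transform is evaluated by Whittaker--Plancherel theory together with Bump--Stade-type $\zeta$-integrals that produce the Sklyanin measure and the products of Gamma functions.

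One imprecision worth flagging: the display you label the ``Givental/Kharchev--Lebedev Mellin--Barnes representation'' of $\psi^{(N)}_a$ is not that representation --- the Kharchev--Lebedev/Givental formula is a nested, recursive $\binom{N}{2}$-fold integral, not an $N$-fold integral against $s_N(dw)$ with a single $\prod_{i,j}\Gamma(a_j-w_i)$ kernel. What actually enters in \cite{cosz} is the Whittaker--Plancherel inversion (spectral measure $s_N$) applied to the auxiliary $\alpha$-dependent factor in the density, after which the surviving $t$-integrals are evaluated by the Bump--Stade identity; this is the step that produces $\prod_{i,j}\Gamma(a_j-w_i)$ and the $u^{w_j}$ factors inside $F$. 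You correctly identify this as the delicate point, but as written the proposal conflates the integral representation of a single Whittaker function with the Plancherel expansion of the density, and it would need that distinction made precise (plus the interchange-of-integration and contour-placement justifications you mention) to become a complete proof. Since those details are exactly the content of \cite{cosz}, citing that paper --- as the authors do --- is the efficient route here.
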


Until now, there has not been progress in extracting asymptotics from this formula. The
following theorem, however, transforms this integral formula into a Fredholm determinant
for which we can readily perform asymptotic analysis. This identity should be considered
the main technical contribution of this paper, of which Theorem \ref{thm:fluct} is
essentially a corollary (after some asymptotic analysis).

\begin{defn}
  We introduce the following contours: $C_\delta$ is a positively oriented circle around
  the origin with radius $\delta$; $\lc_\delta$ is a line parallel to the imaginary axis
  from $-\I\infty+\delta$ to $\I\infty+\delta$; $-\lc_\delta$ is similarly the contour
  from $-\I\infty-\delta$ to $\I\infty-\delta$; and $\lc'_{\delta_1,\delta_2,M}$ is the
  horizontal line segment going from $-\delta_1+\I M$ to $\delta_2+\I M$.
  For any simple smooth contour $\gamma$ in $\cc$ we will write $L^2(\gamma)$ to mean the
  space $L^2(\gamma,\mu)$ where $\mu$ is the path measure along $\gamma$ divided by $2\pi\I$.
\end{defn}


\begin{thm}\label{thm:equiv}
  Fix $0<\delta_2<1$, $0<\delta_1<\min\{\delta_2,1-\delta_2\}$ and $a_1,\dotsc,a_N\in\cc$ such that
  $|a_i|<\delta_1$. Suppose $F$ is a meromorphic function such that all its poles have
  real part strictly larger than $\delta_2$, $F$ is non-zero along and inside $C_{\delta_1}$, and for all $\kappa>0$
  \begin{equation}
    \int_{\pm \lc_{\delta_2}}dw\,e^{\pi(\frac{N}{2}-1)|\!\im(w)|}|\!\im(w)|^{\kappa}|F(w)|<\infty,
    \quad\int_{\lc'_{\delta_1,\delta_2,M}}\!\!dw\,e^{\pi(\frac{N}{2}-1)|\!\im(w)|}|\!\im(w)|^{\kappa}|F(w)|
    \xrightarrow[|M|\to\infty]{}0.\label{eq:decayAssum}
  \end{equation}
  Then
  \begin{equation}
    \int_{-\lc_{\delta_1}}\!\!\dotsm\!\int_{-\lc_{\delta_1}}
    s_N(dw_1,\dotsc,dw_N)\,\prod_{i,j=1}^N\Gamma(a_j-w_i)\prod_{j=1}^N\frac{F(w_j)}{F(a_j)}
    =\det(I+K)_{L^2(C_{\delta_1})},\label{eq:equiv}
  \end{equation}
  where $\det(I+K)_{L^2(C_{\delta_1})}$ is the Fredholm determinant of
  $K\!:L^2(C_{\delta_1})\longrightarrow L^2(C_{\delta_1})$ with
  \begin{equation}
    K(v,v')=\frac1{2\pi\I}\int_{\lc_{\delta_2}}dw\,\frac\pi{\sin(\pi(v-w))}
    \frac{F(w)}{F(v)}\frac1{w-v'}\prod_{m=1}^N\frac{\Gamma(v-a_m)}{\Gamma(w-a_m)}.\label{eq:defK}
  \end{equation}
\end{thm}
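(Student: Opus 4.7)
The plan is to exploit a factorization of the kernel $K$ in order to reduce the Fredholm determinant on the right to an $N\times N$ matrix determinant, and then to match this to the left-hand side by contour deformation and residue calculus.

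I would first write $K=AB$ on $L^2(C_{\delta_1})$, where $A:L^2(\lc_{\delta_2})\to L^2(C_{\delta_1})$ and $B:L^2(C_{\delta_1})\to L^2(\lc_{\delta_2})$ have kernels
$$A(v,w)=\frac{\pi}{\sin(\pi(v-w))}\frac{F(w)}{F(v)}\prod_{m=1}^N\frac{\Gamma(v-a_m)}{\Gamma(w-a_m)},\qquad B(w,v')=\frac{1}{w-v'}.$$
By the cyclic identity $\det(I+AB)=\det(I+BA)$, I would compute $(BA)(w,w')=\int_{C_{\delta_1}}B(w,v)A(v,w')\,dv/(2\pi\I)$ by residues. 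Under the assumptions of the theorem the only poles of the $v$-integrand inside $C_{\delta_1}$ are simple poles at $v=a_m$ coming from $\Gamma(v-a_m)$: $F(v)$ is non-zero inside by hypothesis, $1/(w-v)$ and $\pi/\sin(\pi(v-w'))$ have their singularities outside (since $\Real(w)=\Real(w')=\delta_2>\delta_1\geq|v|$ and $\delta_1+\delta_2<1$), and $\Gamma(w'-a_m)$ is $v$-independent. Evaluating the residues and simplifying via the reflection formula $\Gamma(z)\Gamma(1-z)=\pi/\sin(\pi z)$ yields $(BA)(w,w')=\sum_{m=1}^N g_m(w')/(w-a_m)$ for explicit functions $g_m$. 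Since $BA$ therefore has rank at most $N$, the Fredholm determinant collapses to
$$\det(I+K)=\det\bigl[\delta_{mn}+M_{mn}\bigr]_{m,n=1}^N,\qquad M_{mn}=\int_{\lc_{\delta_2}}\frac{g_m(w)}{w-a_n}\frac{dw}{2\pi\I}.$$

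Next, I would transform the LHS of \eqref{eq:equiv} into the same matrix determinant. Deforming each $w_i$-contour from $-\lc_{\delta_1}$ to $\lc_{\delta_2}$, the decay hypothesis \eqref{eq:decayAssum} kills the horizontal pieces at $\pm\I\infty$, and in the strip $-\delta_1<\Real(w)<\delta_2$ the only poles crossed are the simple poles of $\Gamma(a_j-w_i)$ at $w_i=a_j$, each with residue $-1$. The factor $\prod_{i\neq i'}1/\Gamma(w_i-w_{i'})$ vanishes whenever two residues coincide, so the expansion produces a sum over pairs $(S,\iota)$ with $S\subset\{1,\dotsc,N\}$ (residued indices) and $\iota:S\to\{1,\dotsc,N\}$ an injection (specifying the pole locations). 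I would match this to the cofactor expansion $\det(I+M)=\sum_{T}\det[M_{ij}]_{i,j\in T}$, with $T$ the complement of $\iota(S)$: writing each minor via Andreief's identity as a $|T|$-fold integral and recognizing the Cauchy determinant $\det[1/(w_k-a_j)]_{k,j\in T}$, one can match integrands term-by-term.

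The main obstacle is the combinatorial and algebraic bookkeeping in this matching: one must track the signs $(-1)^{|S|}$ from the residues, the $1/N!$ and $(2\pi\I)^{\pm}$ normalizations, and the Gamma ratios produced by the residue evaluations. The key algebraic identity enabling the match is once more the reflection formula, which converts the $\prod 1/\Gamma(w_i-w_{i'})$ factor on the LHS into a product of $\sin$ and Vandermonde factors compatible with the Cauchy determinant appearing on the right. A secondary technical point is the justification of absolute convergence of the Fredholm series and the legality of exchanging sums, integrals, and contour deformations; this is precisely the role of the decay assumption \eqref{eq:decayAssum}.
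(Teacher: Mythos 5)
Your reduction of the right-hand side to an $N\times N$ determinant follows the paper exactly: the factorization $K=AB$, the cyclic identity $\det(I+AB)=\det(I+BA)$, the residue evaluation of $(BA)(w,w')$ at $v=a_m$, the recognition that $BA$ has rank $\leq N$, and a second application of the cyclic identity to collapse to a matrix determinant $\det[\delta_{ij}+\frac{1}{2\pi\I}\int_{\lc_{\delta_2}}f_i(w)g_j(w)\,dw]$ with $f_i(w)=(w-a_i)^{-1}$ and $g_j(w')$ involving $\pi/\sin(\pi(a_j-w'))$ and the Gamma ratios. Where you diverge from the paper is in the final matching step. You propose to deform the $N$-fold integral on the left from $(-\lc_{\delta_1})^N$ to $(\lc_{\delta_2})^N$, collect residues parametrized by pairs $(S,\iota)$, and match against the cofactor expansion $\det(I+M)=\sum_T\det[M_{ij}]_{i,j\in T}$; you flag the resulting combinatorial bookkeeping as the main obstacle. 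The paper avoids this entirely with a neater move: instead of deforming the left-hand side, it shifts the $w$-contour \emph{inside each entry} of the $N\times N$ determinant from $\lc_{\delta_2}$ to $-\lc_{\delta_1}$. The crucial observation is that the integrand $f_i(w)g_j(w)$ picks up a pole at $w=a_j$ \emph{only when} $i=j$, because for $i\neq j$ the pole of $\pi/\sin(\pi(a_j-w))$ at $w=a_j$ is cancelled by the zero of $1/\Gamma(w-a_j)$ in the product $\prod_{m\neq i}1/\Gamma(w-a_m)$; and the diagonal residue is exactly $-1$, which precisely cancels the $\delta_{ij}$. This leaves $\det[\frac{1}{2\pi\I}\int_{-\lc_{\delta_1}}f_i(w)g_j(w)\,dw]$, which the Andr\'eief identity turns directly into an $N$-fold integral over $(-\lc_{\delta_1})^N$; the Cauchy determinant evaluations of $\det[f_i(w_j)]$ and $\det[g_i(w_j)]$, together with the reflection formula, then reproduce the Sklyanin-measure integrand. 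So your overall strategy is sound and would likely succeed, but the paper's single contour shift inside the matrix determinant replaces your sum over subsets and injections by a scalar cancellation of the identity on the diagonal, which is why the paper's proof contains no such bookkeeping. If you wish to pursue your route, the thing to verify carefully is that for each subset $T$ the collection of residue terms with $\iota(S)^c=T$, summed with the $1/N!$ prefactor and the signs $(-1)^{|S|}$, reassembles into the single minor $\det[M_{ij}]_{i,j\in T}$; the symmetry of the integrand in the $w_i$ and the vanishing of $1/\Gamma(w_i-w_j)$ at coinciding arguments are what make this plausible, but it is genuinely more work than the paper's argument.
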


The proof of this theorem is given in Section \ref{Sec:equiv}. The argument uses only the
Andr\'{e}ief identity \cite{And} (sometimes referred to as the generalized Cauchy-Binet
identity), the Cauchy determinant, the fact that $\det(I+AB)=\det(I+BA)$ and some simple
contour shifts and residue computations to go from the Fredholm determinant formula on the
right hand side of \eqref{eq:equiv} to the integral formula on the left hand side. Going
from the integral formula to the Fredholm determinant formula (even knowing that it should
be true) is a much more challenging path, since one has to undo certain cancelations to
discover determinants.

\begin{rem}
  In the context of the related semi-discrete polymer (see the end of the introduction)
  O'Connell describes (after Corollary 4.2 of \cite{oconnellQTL}) another Fredholm
  determinant expression for the Laplace transform of the partition function. That
  expression and the one which arises from the application of Theorem \ref{thm:equiv} are
  different.
\end{rem}

\begin{rem}
  The condition in Theorem \ref{thm:equiv} on the $|a_i|<\delta_1$ can be relaxed at the
  cost of more complicated choices of contours. Instead of taking the $w$ contour to be a
  vertical line $\lc_{\delta_2}$ one could, in order to accommodate larger values of the
  $a_i$, shift the vertical line horizontally to the right by some positive integer $K$,
  and then augment the $w$ contour with a collection of sufficiently small contours around
  the positive integers up to and including $K$. See Section 5 of
  \cite{borodinCorwinFerrari} for an example of this sort of procedure.
\end{rem}

We may apply Theorem \ref{thm:equiv} to show the following.

\begin{cor}\label{cor:COSZdet}
  Fix $n\geq N$ and any $\delta_1,\delta_2$ such that $0<\delta_2<1$ and
  $0<\delta_1<\min\{\delta_2,1-\delta_2\}$. Given a collection of $\alpha_i>\delta_2$ for
  $1\leq i\leq n$, and $0\leq a_j<\delta_1$ for $1\leq j\leq N$, set
  $\thetarc_{i,j}=\alpha_i-a_j$ and consider the log-Gamma polymer partition function with
  weights $d_{i,j}\sim \Gamma^{-1}(\thetarc_{i,j})$. Then for all $u$ with $\Real(u) >0$
  \begin{equation}
    \EE\!\left[ e^{-u Z(n,N)}\right] = \det(I+K_u)_{L^2(C_{\delta_1})},
  \end{equation}
  where $\det(I+K)_{L^2(C_{\delta_1})}$ is the Fredholm determinant of $K\!:L^2(C_{\delta_1})\longrightarrow L^2(C_{\delta_1})$ with
  \begin{equation}
    K_u(v,v')=\frac1{2\pi\I}\int_{\lc_{\delta_2}}dw\,\frac\pi{\sin(\pi(v-w))}
    \frac{F(w)}{F(v)}\frac1{w-v'}\prod_{m=1}^N\frac{\Gamma(v-a_m)}{\Gamma(w-a_m)}
  \end{equation}
  and $F(w)$ as given in \eqref{eqn:Fw}.
\end{cor}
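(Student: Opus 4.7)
The plan is simply to apply Theorem \ref{thm:equiv} to the formula provided by Proposition \ref{COSZform}, with $F(w)=u^w\prod_{m=1}^n\Gamma(\alpha_m-w)$ as in \eqref{eqn:Fw}. The only points that need checking are the compatibility of contours and the structural hypotheses on $F$. The $w_i$ contour in Proposition \ref{COSZform} is the imaginary axis, while Theorem \ref{thm:equiv} requires $-\lc_{\delta_1}$, so I would first justify shifting each $w_i$ from $\I\R$ to $-\lc_{\delta_1}$. The poles of $\Gamma(a_j-w_i)$ lie at $w_i\in\{a_j,a_j+1,\dotsc\}$, all with $\re(w_i)\geq 0>-\delta_1$; the poles of $F(w_i)$ lie at $\{\alpha_m+k:k\geq 0\}$, all with $\re(w_i)>\delta_2>-\delta_1$; and the Sklyanin weight is entire. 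Hence no poles are crossed in the deformation, which also neatly sidesteps the fact that $a_j=0$ would otherwise place a pole of $\Gamma(-w_i)$ directly on the original contour.

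Next I would verify the three hypotheses on $F$ listed in Theorem \ref{thm:equiv}. Meromorphy and the pole-location condition are immediate: $F$ is a finite product of $\Gamma(\alpha_m-\cdot)$ times the entire function $u^\cdot$, and $\alpha_m>\delta_2$ places every pole $w=\alpha_m+k$ strictly to the right of $\delta_2$. Non-vanishing on and inside $C_{\delta_1}$ is clear since neither $\Gamma$ nor $u^w$ ever vanishes. For the decay estimate \eqref{eq:decayAssum} I would invoke Stirling's asymptotic formula, which yields, uniformly for $\re(w)$ in any compact set,
\begin{equation*}
|\Gamma(\alpha_m-w)|\leq C(1+|\im(w)|)^{\alpha_m-\re(w)-1/2}e^{-\pi|\im(w)|/2},
\end{equation*}
while $|u^w|\leq Ce^{|\arg(u)|\,|\im(w)|}$ with $|\arg(u)|<\pi/2$ since $\re(u)>0$. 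Multiplying these bounds over $m=1,\dots,n$ and folding in the weight in \eqref{eq:decayAssum} gives
\begin{equation*}
e^{\pi(\frac{N}{2}-1)|\im(w)|}|\im(w)|^{\kappa}|F(w)|\leq C(1+|\im(w)|)^{c}\exp\!\left(-|\im(w)|\bigl[\tfrac{\pi}{2}(n-N)+\pi-|\arg(u)|\bigr]\right).
\end{equation*}
With $n\geq N$ and $|\arg(u)|<\pi/2$, the bracketed rate is strictly positive (at least $\pi/2$), so the displayed right-hand side decays exponentially in $|\im(w)|$; this makes both integrals in \eqref{eq:decayAssum} finite and forces the horizontal-segment integral along $\lc'_{\delta_1,\delta_2,M}$ to vanish as $|M|\to\infty$.

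With all hypotheses verified, Theorem \ref{thm:equiv} produces exactly the Fredholm determinant $\det(I+K_u)_{L^2(C_{\delta_1})}$ with the kernel stated in the corollary. The main (and quite mild) obstacle is the bookkeeping for the multi-dimensional contour shift, since the integrand of Proposition \ref{COSZform} also contains the Sklyanin weight and the factors $\Gamma(a_j-w_i)$; but the same Stirling bound controls these along the one-parameter family of intermediate contours interpolating $\I\R$ and $-\lc_{\delta_1}$, because each $\Gamma(a_j-w_i)$ contributes an extra $e^{-\pi|\im(w_i)|/2}$ and the resulting surplus decay more than absorbs the growth of the Sklyanin weight under the assumption $n\geq N$.
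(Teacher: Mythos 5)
Your proof is correct and follows essentially the same route as the paper's: apply Theorem \ref{thm:equiv} after verifying the hypotheses on $F$, using the Stirling-type bound (the paper's \eqref{eq:gammaBd}) together with $n\geq N$ to establish the decay condition \eqref{eq:decayAssum}, and handling the $\I\R$ versus $-\lc_{\delta_1}$ contour discrepancy by a pole-free deformation. The paper's own proof is terser (it dismisses the contour shift with a remark about the $a_j=0$ case and cites \eqref{eq:gammaBd} for the decay) but the substance is identical to yours.
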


\begin{proof}
  We start with the Laplace transform formula given in Proposition \ref{COSZform} (if some
  $a_j=0$ we simply shift the $\I \R$ integration contour slightly to the left). In order
  to apply Theorem \ref{thm:equiv} we must check that all the conditions are
  satisfied. The function $F(w)$ has no poles with real part less than $\min\{\alpha_i\}$
  and it is non-zero in the entire complex plane. This implies that given $\delta_1$ and
  $\delta_2$ as specified in the hypothesis, the conditions on the poles and zeros of $F$
  are satisfied. It remains to check the decay condition \eqref{eq:decayAssum}. This,
  however, is immediate from the estimate on the Gamma function as given in
  \eqref{eq:gammaBd}. Note that the condition $n\geq N$ becomes important in this case
  (actually $n\geq N-1$ would do). The conditions having been satisfied, we may apply
  Theorem \ref{thm:equiv} to arrive at the corollary.
\end{proof}

The asymptotic analysis of this Fredholm determinant is performed in Section \ref{sec:Asym} and yields the proof of Theorem \ref{thm:fluct}.

The existence of such an identity as in Theorem \ref{thm:fluct} did not arise out of the
blue. Let us briefly explain the two results which suggested this identity (though only in the special case of $F(w)$ as in \eqref{eq:Fw2}).

\begin{defn}
  An {\it up/right path} in $\R\times \Z$ is an increasing path which either proceeds to
  the right or jumps up by one unit. For each sequence $0<s_1<\cdots<s_{N-1}<t$ we can
  associate an up/right path $\phi$ from $(0,1)$ to $(t,N)$ which jumps between the
  points $(s_i,i)$ and $(s_{i},i+1)$, for $i=1,\ldots, N-1$, and is continuous
  otherwise. Fix a real vector $a=(a_1,\ldots, a_N)$ and let $B(s) = (B_1(s),\ldots,
  B_N(s))$ for $s\geq 0$ be independent standard Brownian motions such that $B_i$ has
  drift $a_i$.

Define the {\it energy} of a path $\phi$ to be
\begin{equation*}
E(\phi) = B_1(s_1)+\left(B_2(s_2)-B_2(s_1)\right)+ \cdots + \left(B_N(t) - B_{N}(s_{N-1})\right).
\end{equation*}
Then the {\it O'Connell-Yor semi-discrete directed polymer partition function} $Z^{N}(t)$ is given by
\begin{equation}\label{Zsd}
Z^{N}(t) = \int d\phi\,e^{E(\phi)},
\end{equation}
where the integral is with respect to Lebesgue measure on the Euclidean set of all
up/right paths $\phi$ (i.e., the simplex of jumping times $0<s_1<\cdots<s_{N-1}<t$).
\end{defn}

Due to the invariance principle, the semi-discrete polymer is a universal scaling limit for discrete directed polymers when $N$ is fixed and $n$ goes to infinity (and temperature is suitably scaled -- see for instance \cite{ABC}). As such, the semi-discrete polymer inherits the solvability of the log-Gamma polymer.

In fact, before the work of \cite{cosz} on the log-Gamma polymer, O'Connell \cite{oconnellQTL} showed (see Corollary 4.2) that $\EE\!\left[ e^{-u Z^N(t)}\right]$ was given by the left hand side of \eqref{eq:equiv} with
\begin{equation}
F(w)= u^w e^{w^2t/2}.\label{eq:Fw2}
\end{equation}

On the other hand, the theory of Macdonald processes was developed in \cite{borCor}. Using
Macdonald difference operators and the Cauchy identity for Macdonald polynomials
\cite{borCor} computes expectations of certain observables of the Macdonald processes,
which, when formed into generating functions, lead to Fredholm determinants. After a
particular limit transition, the Macdonald processes become the Whittaker processes
introduced in \cite{oconnellQTL} and the generating functions become a Fredholm
determinant expression for $\EE\!\left[ e^{-u Z^N(t)}\right]$ -- exactly in the form of
the one on the right hand side of \eqref{eq:equiv}. In particular the following result
appeared in \cite{borCor} as Theorem 5.2.10 (a change of variables $s+v=w$ brings it
exactly to the form of \eqref{eq:defK}).
\begin{thm}
Fix $N\geq 1$ and a drift vector $a=(a_1,\ldots,a_N)$. Fix $0<\delta_2<1$, and $\delta_1<\delta_2 /2$ such that $|a_i|<\delta_1$. Then for $t\geq 0$,
\begin{equation*}
\EE\left[ e^{-u Z^{N}(t)}\right] = \det(I+ K_{u})
\end{equation*}
where $\det(I+ K_{u})$ is the Fredholm determinant of
\begin{equation*}
K_{u}: L^2(C_{a})\to L^2(C_{a})
\end{equation*}
for $C_{a}$ a positively oriented contour containing $a_1,\ldots, a_N$ and such that for all $v,v'\in C_{a}$, we have $|v-v'|<\delta_2$. The operator $K_u$ is defined in terms of its integral kernel
\begin{equation*}
K_{u}(v,v') = \frac{1}{2\pi \I}\int_{\lc_{\delta_2}}ds\,\Gamma(-s)\Gamma(1+s) \prod_{m=1}^{N}\frac{\Gamma(v-a_m)}{\Gamma(s+v-a_m)} \frac{ u^s e^{vt s+t s^2/2}}{v+s-v'}.
\end{equation*}
\end{thm}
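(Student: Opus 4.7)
The plan is to mirror the proof of Corollary \ref{cor:COSZdet}, substituting O'Connell's Corollary 4.2 of \cite{oconnellQTL} for Proposition \ref{COSZform}. That result asserts that $\EE\!\left[e^{-uZ^N(t)}\right]$ is exactly the left-hand side of \eqref{eq:equiv} specialized to the entire function $F(w)=u^we^{w^2t/2}$ from \eqref{eq:Fw2}. Granted this, only three steps remain: (i) check the hypotheses of Theorem \ref{thm:equiv} for this $F$, (ii) rewrite the resulting kernel \eqref{eq:defK} in the $s$-variable form stated in the theorem, and (iii) deform the $v$-contour from $C_{\delta_1}$ to the prescribed $C_a$.

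For (i), since $F$ is entire and nowhere vanishing, the pole and zero hypotheses of Theorem \ref{thm:equiv} are vacuous. For the decay estimate \eqref{eq:decayAssum}, on $\pm\lc_{\delta_2}$ one has $|F(w)|=u^{\pm\delta_2}\exp\!\big(\tfrac t2(\delta_2^2-\im(w)^2)\big)$, whose Gaussian decay in $|\im(w)|$ trivially dominates $e^{\pi(\frac N2-1)|\im(w)|}|\im(w)|^\kappa$ for every $t>0$, while on $\lc'_{\delta_1,\delta_2,M}$ the same estimate yields a uniform bound of order $e^{-tM^2/2}\to 0$ as $|M|\to\infty$ (the $t=0$ case is degenerate and recovered by continuity). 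For (ii), substitute $s=w-v$ (so $dw=ds$) in \eqref{eq:defK}: Euler's reflection formula gives $\pi/\sin(\pi(v-w))=-\pi/\sin(\pi s)=\Gamma(-s)\Gamma(1+s)$; the Gamma product $\prod_m\Gamma(v-a_m)/\Gamma(w-a_m)$ becomes $\prod_m\Gamma(v-a_m)/\Gamma(s+v-a_m)$; the factor $1/(w-v')$ becomes $1/(v+s-v')$; and expanding $F(w)/F(v)=u^{w-v}e^{t(w^2-v^2)/2}$ with $w=v+s$ yields $u^s e^{vts+ts^2/2}$, reproducing the kernel stated in the theorem.

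For (iii), the kernel is jointly analytic in $v$ in the closed annular region between $C_{\delta_1}$ and any positively oriented contour $C_a$ of diameter less than $\delta_2$ enclosing the $a_i$: the potential poles of $\prod_m\Gamma(v-a_m)$ occur at $v=a_m-k$ with $k\in\Zgeqzero$, of which only $v=a_m$ lie inside $C_{\delta_1}$, and these remain inside $C_a$; the pole $v=v'-s$ arising from $1/(v+s-v')$ stays outside both contours thanks to the diameter restriction $|v-v'|<\delta_2$ and the placement of the $s$-contour on $\lc_{\delta_2}$. Consequently the Fredholm determinant value is unchanged under the deformation. The only conceivable analytic obstacle in the whole argument is the decay condition \eqref{eq:decayAssum}, which is completely trivialized here by the Gaussian factor $e^{w^2t/2}$; this is precisely the estimate that the introduction identifies as non-trivial in the Macdonald-process derivation of \cite{borCor}, and which is bypassed here by routing the computation through O'Connell's rigorously established integral formula.
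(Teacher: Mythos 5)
The paper does not actually prove this statement; it quotes it verbatim as Theorem~5.2.10 of \cite{borCor}, where it was established by a degeneration of Macdonald process determinants. What you propose --- routing through O'Connell's Corollary~4.2 and then Theorem~\ref{thm:equiv} --- is therefore a genuinely different argument, and in fact it is exactly the reversal of the paper's own remark that O'Connell together with \cite{borCor}'s Theorem~5.2.10 yields an \emph{indirect} proof of \eqref{eq:equiv} for $F$ as in \eqref{eq:Fw2}. Your route gives an independent, self-contained proof of the semi-discrete determinant that bypasses the Macdonald machinery altogether, which is precisely the kind of direct derivation Theorem~\ref{thm:equiv} was built to enable. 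Steps (i) and (iii), and the algebra of (ii), are all correct.

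Two technical points need tightening, however. First, in step~(ii) the $w$-integral in \eqref{eq:defK} runs over $\lc_{\delta_2}$, so after the substitution $s=w-v$ the $s$-integral runs over the $v$-dependent translate $\lc_{\delta_2}-v$, \emph{not} over $\lc_{\delta_2}$ as the statement requires. You must shift this contour back to $\lc_{\delta_2}$ uniformly over $v\in C_a$, checking that the integer poles of $\Gamma(-s)\Gamma(1+s)$ stay outside the intervening strip (this uses $\delta_1<1-\delta_2$, which keeps $s=1$ to the right) and that the pole at $s=v'-v$ stays to the left (this uses $\delta_1<\delta_2/2$, which bounds $|\!\Real(v'-v)|<\delta_2$). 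Second, and relatedly, the stated hypothesis $\delta_1<\delta_2/2$ does not imply the hypothesis $\delta_1<\min\{\delta_2,1-\delta_2\}$ of Theorem~\ref{thm:equiv} once $\delta_2>2/3$. The fix is to note that the right-hand side is independent of the placement of the $s$-contour (it can be slid freely among the lines $\lc_\delta$ with $\delta$ between $2\delta_1$ and $1$ without crossing poles), so one may reduce to $\delta_2\leq1/2$ before invoking Theorem~\ref{thm:equiv}; but this should be said explicitly. Finally, the appeal to ``continuity'' at $t=0$ is not justified --- the kernel integral in fact diverges at $t=0$ once $N\geq 2$, since the Gaussian factor is the only source of decay --- so it is cleaner to simply restrict the statement to $t>0$.
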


This provides a very indirect proof of the identity given in \eqref{eq:equiv}, in the very
particular case of $F(w)$ as in \eqref{eq:Fw2}. An obvious question this development
raised was to provide a direct proof of this identity and to understand how general it
is. This is what Theorem \ref{thm:equiv} accomplishes.

It is worth noting that Macdonald processes also have a limit transition to the Whittaker
processes described in \cite{cosz}, which are connected to the log-Gamma polymer.  The
Fredholm determinant given in Corollary \ref{cor:COSZdet} was anticipated in \cite{borCor}
via the formalism of Macdonald processes yet its rigorous proof was so far lacking because
of the nontriviality of certain decay estimates required by that approach. If one only
cares about the GUE Tracy-Widom asymptotics then the approach given here provides a
direct, though non-obvious and rather ad hoc, route from the integral formula of
\cite{cosz} to the Fredholm determinant.

\subsection*{Acknowledgements}

AB was partially supported by the NSF grant DMS-1056390. IC was partially supported by the
NSF through DMS-1208998 as well as by the Clay Research Fellowship and by Microsoft
Research through the Schramm Memorial Fellowship. DR was partially supported by the
Natural Science and Engineering Research Council of Canada, by a Fields-Ontario
Postdoctoral Fellowship and by Fondecyt Grant 1120309. DR is appreciative for MIT's hospitality during the visit in which this project was initiated.

\section{Fredholm determinant asymptotic analysis: Proof of Theorem \ref{thm:fluct}}\label{sec:Asym}

Let us first recall two useful lemmas. In performing steepest descent analysis on Fredholm
determinants, the following allows us to deform contours to descent curves.

\begin{lem}[Proposition 1 of \cite{TW3}]\label{TWprop1}
Suppose $s\to \Gamma_s$ is a deformation of closed curves and a kernel $L(\eta,\eta')$ is analytic in a neighborhood of $\Gamma_s\times \Gamma_s\subset \C^2$ for each $s$. Then the Fredholm determinant of $L$ acting on $\Gamma_s$ is independent of $s$.
\end{lem}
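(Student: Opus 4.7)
The plan is to reduce the statement to a term-by-term analysis of the Fredholm series expansion
\[
\det(I+L)_{L^2(\Gamma_s)}=\sum_{n=0}^{\infty}\frac{1}{n!}\int_{\Gamma_s^n}\det[L(\eta_i,\eta_j)]_{i,j=1}^n\,d\eta_1\cdots d\eta_n,
\]
where the path measure on $\Gamma_s$ is the one normalized as in the paper's definition of $L^2(\gamma)$. I would first establish that each $n$-fold integral is independent of $s$, and then argue that the series converges uniformly in $s$, so that its sum inherits the property.

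For the per-term claim, fix $n$ and observe that by hypothesis $L$ is jointly analytic in a neighborhood of $\Gamma_s\times\Gamma_s\subset\C^2$, so $\det[L(\eta_i,\eta_j)]_{i,j=1}^n$ is holomorphic in each variable $\eta_k$ on a tubular neighborhood of $\Gamma_s$. Since $\Gamma_s$ is closed, I would apply Cauchy's theorem one variable at a time (freezing the remaining variables on the original contour) to deduce that deforming $\Gamma_s$ inside that tube leaves the integral unchanged. To upgrade this to a global $s$-invariance, I would use a standard covering argument: continuity of $s\mapsto\Gamma_s$ guarantees that for each $s_0$ in the parameter set there is an interval $[s_0-\epsilon,s_0+\epsilon]$ over which all the $\Gamma_s$ lie in a common tube of analyticity of $L$; finitely many such intervals cover the compact parameter set and yield $s$-invariance of the $n$-th summand throughout.

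Uniform convergence of the series in $s$ follows from Hadamard's inequality. Setting $M=\sup_{s}\sup_{\eta,\eta'\in\Gamma_s}|L(\eta,\eta')|$ and $\ell=\sup_s|\Gamma_s|/(2\pi)$, both finite since $L$ is continuous on the compact set $\bigcup_s\Gamma_s\times\Gamma_s$ (this set is compact because the deformation is continuous on a compact parameter interval), one bounds the $n$-th summand by $n^{n/2}(M\ell)^n/n!$, which is summable by Stirling. This dominates the series uniformly in $s$, so the already-established $s$-invariance of each term transfers to the sum.

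The main, and rather mild, obstacle is the covering/bookkeeping step that promotes the local contour-deformation invariance (a direct consequence of multivariate Cauchy's theorem) to global invariance in $s$. The Hadamard tail estimate is standard, and no residue contributions arise because the kernel is analytic along the whole deformation.
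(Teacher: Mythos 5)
The paper does not prove this lemma: it is quoted verbatim as Proposition 1 of Tracy--Widom \cite{TW3} and used as a black box, so there is no in-paper argument to compare against. Your proof is correct and follows the same route as the original: expand the Fredholm determinant as its series, note that each $n$-fold contour integral has a holomorphic integrand in each variable separately near the contour, invoke Cauchy's theorem one variable at a time for small deformations, and globalize by compactness; the Hadamard bound then controls convergence. One small observation: the uniform-in-$s$ convergence step is not actually needed once you have shown the $n$-th summand to be \emph{identical} for all $s$; absolute convergence of the Fredholm series at each fixed $s$ (which the same Hadamard estimate gives pointwise) already forces equality of the two sums. The uniform estimate would be the right tool if you were instead arguing continuity of the determinant in $s$, but term-by-term invariance makes that detour unnecessary.
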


\begin{lem}[Lemma 4.1.38 of \cite{borCor}]\label{problemma1}
Consider a sequence of functions $\{f_n\}_{n\geq 1}$ mapping $\R\to [0,1]$ such that for each $n$, $f_n(x)$ is strictly decreasing in $x$ with a limit of $1$ at $x=-\infty$ and $0$ at $x=\infty$, and for each $\delta>0$, on $\R\setminus[-\delta,\delta]$ $f_n$ converges uniformly to $\bfone(x\leq 0)$. Define the $r$-shift of $f_n$ as $f^r_n(x) = f_n(x-r)$. Consider a sequence of random variables $X_n$ such that for each $r\in \R$,
\begin{equation*}
\EE[f^r_n(X_n)] \to p(r)
\end{equation*}
and assume that $p(r)$ is a continuous probability distribution function. Then $X_n$ converges weakly in distribution to a random variable $X$ which is distributed according to $\PP(X\leq r) = p(r)$.
\end{lem}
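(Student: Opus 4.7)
The plan is to sandwich the indicators $\bfone(x \leq r)$ between shifts of the smoothed step functions $f_n^r$ and then pass to the limit using the uniform convergence hypothesis together with the continuity of $p$.

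Fix $r \in \R$ and $\delta > 0$. The uniform convergence of $f_n$ to $\bfone(x \leq 0)$ on $\R \setminus [-\delta,\delta]$ produces a sequence $\epsilon_n \downarrow 0$ (depending on $\delta$ but not on $r$) such that, for all $n$ sufficiently large, $f_n(y) \geq 1 - \epsilon_n$ whenever $y \leq -\delta$ and $f_n(y) \leq \epsilon_n$ whenever $y \geq \delta$. Translating by $r$ and using that $f_n^r$ takes values in $[0,1]$, I obtain the pointwise sandwich
\begin{equation*}
  (1 - \epsilon_n)\, \bfone(x \leq r - \delta) \leq f_n^r(x) \leq \bfone(x \leq r + \delta) + \epsilon_n.
\end{equation*}
Evaluating at $x = X_n$ and taking expectations yields
\begin{equation*}
  (1 - \epsilon_n)\, \PP(X_n \leq r - \delta) \leq \EE[f_n^r(X_n)] \leq \PP(X_n \leq r + \delta) + \epsilon_n.
\end{equation*}

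Next I would use the hypothesis $\EE[f_n^r(X_n)] \to p(r)$. Replacing $r$ by $r + \delta$ in the left inequality and passing to $\limsup$ gives $\limsup_n \PP(X_n \leq r) \leq p(r + \delta)$, while replacing $r$ by $r - \delta$ in the right inequality and passing to $\liminf$ gives $\liminf_n \PP(X_n \leq r) \geq p(r - \delta)$. Letting $\delta \downarrow 0$ and invoking continuity of $p$, both bounds collapse to $p(r)$. Hence $\PP(X_n \leq r) \to p(r)$ at every $r \in \R$, which is exactly the weak convergence of $X_n$ to $X$ with $\PP(X \leq r) = p(r)$.

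The argument contains no real obstacle; it is a sandwich estimate followed by two passages to the limit. The only point requiring any care is to verify that the rate $\epsilon_n$ in the tail bounds can be chosen uniformly in $r$, which is automatic because translating $f_n^r$ merely shifts the window where $f_n$ has not yet converged, without affecting the tail estimates themselves.
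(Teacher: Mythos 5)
Your proof is correct; the paper itself does not reprove this lemma (it is quoted verbatim from Lemma 4.1.38 of \cite{borCor}), but the argument there is exactly the sandwich estimate you give, comparing $f_n^r$ to shifted indicators, taking expectations, passing to $\limsup$ and $\liminf$, and then sending $\delta\downarrow 0$ using continuity of $p$. One purely cosmetic remark: the hypothesis gives uniform convergence on the open complement $\R\setminus[-\delta,\delta]$, so to get the bounds at $y=\pm\delta$ you should apply the hypothesis with $\delta/2$ (say), which you implicitly do by noting the estimate is uniform in $r$; this does not affect the validity of the argument.
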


\begin{proof}[Proof of Theorem \ref{thm:fluct}]

Consider the function $f_N(x) = e^{-e^{n^{1/3}x}}$ and define $f_n^r(x) = f_n(x-r)$. Observe that this sequence of functions meets the criteria of Lemma \ref{problemma1}. Setting
\begin{equation*}
u=u(n,r,\gamma)=e^{-n\bar{f}_{\gamma} - rn^{1/3}}
\end{equation*}
observe that
\begin{equation*}
 e^{-u Z(n,n)} = f_n^r\left(\frac{\log Z(n,n) - n \bar{f}_{\gamma}}{n^{1/3}}\right).
\end{equation*}
By Lemma \ref{problemma1}, if for each $r\in \R$ we can prove that
\begin{equation*}
\lim_{n\to \infty} \ee\!\left[ f_n^r\left(\frac{\log Z(n,n) - n \bar{f}_{\gamma}}{n^{1/3}}\right) \right] = p_{\gamma}(r)
\end{equation*}
for $p_{\gamma}(r)$ a continuous probability distribution function, then it will follow that
\begin{equation*}
\lim_{n\to \infty} \pp\!\left(\frac{\log Z(n,n) - n \bar{f}_{\gamma}}{n^{1/3}}\leq r\right) = p_{\gamma}(r)
\end{equation*}
as well.

Now, the starting point of our asymptotics is the Fredholm determinant formula given in Corollary \ref{cor:COSZdet} for $\EE\!\left[ e^{-u Z(n,n)}\right]$. Observe that by setting $a_j\equiv 0$ and $\alpha_i\equiv \gamma$, we can choose $\delta_1$ and $\delta_2$ as necessary for the corollary to hold. In particular, that implies that
\begin{equation*}
\ee\!\left[ f_n^r\left(\frac{\log Z(n,n) - n \bar{f}_{\gamma}}{n^{1/3}}\right) \right] = \det(I+K_{u(n,r,\gamma)})
\end{equation*}
where $\det(I+K_{u(n,r,\gamma)})$ is the Fredholm determinant of $K\!:L^2(C_{\delta_1})\longrightarrow L^2(C_{\delta_1})$ with
\begin{equation}\label{Kunrkappa}
K_{u(n,r,\gamma)}(v,v')=\frac1{2\pi\I}\int_{\lc_{\delta_2}}dw\,\frac\pi{\sin(\pi(v-w))}
\exp\left(n\left[G(v)-G(w)\right] + r n^{1/3}(v-w)\right) \frac{dw}{w-v'},
\end{equation}
and
\begin{equation}
G(z) = \log \Gamma(z) -\log \Gamma(\gamma-z) + \bar{f}_{\gamma}z.
\end{equation}
We have rewritten the kernel in the form necessary to perform steepest descent analysis. Let us first provide a critical point derivation of the asymptotics. We will then provide the rigorous proof. Besides standard issues of estimation, we must be careful about manipulating contours due to a mine-field of poles (this ultimately leads to our present technical limitation that $\gamma<\gamma^*$).

The idea of steepest descent is to find critical points for the argument of the function in the exponential, and then to deform contours so as to go close to the critical point. The contours should be engineered such that away from the critical point, the real part of the function in the exponential decays and hence as $n$ gets large, has negligible contribution. This then justifies localizing and rescaling the integration around the critical point. The order of the first non-zero derivative (here third order) determines the rescaling in $n$ (here $n^{1/3}$) which in turn corresponds with the scale of the fluctuations in the problem we are solving. It is exactly this third order nature that accounts for the emergence of Airy functions and hence the GUE Tracy-Widom distribution.

Let us record the first three derivatives of $G(z)$:
\begin{eqnarray}
G'(z) &=& \Psi(z) + \Psi(\gamma-z) +\bar{f}_{\gamma}, \\
G''(z) &=& \Psi'(z) - \Psi'(\gamma-z),\\
G'''(z) &=& \Psi''(z) - \Psi''(\gamma-z).
\end{eqnarray}
Define $\bar{t}_{\gamma}$ such that $G''(\bar{t}_{\gamma}) = 0$; that is, $\bar{t}_{\gamma} = \gamma/2$. Notice that $\bar{f}_{\gamma}$ was defined such that $G'(\bar{t}_{\gamma}) = 0$ as well. Around $z=\bar{t}_{\gamma}$, the first non-vanishing derivative of $G$ is the third derivative. Notice that $\bar{g}_{\gamma} = -G'''(\bar{t}_{\gamma})$. This indicates that near $z=\bar{t}_{\gamma}$,
\begin{equation}
G(v)-G(w) = -\frac{\bar{g}_{\gamma} (v-\bar{t}_{\gamma})^3}{6} + \frac{\bar{g}_{\gamma}
  (w-\bar{t}_{\gamma})^3}{6}+{\rm h.o.t.},
\end{equation}
where h.o.t. denotes higher order terms in $(v-\bar{t}_{\gamma})$. This cubic behavior suggests rescaling around
$\bar{t}_{\gamma}$ by the change of variables
\begin{equation*}
\tilde{v} = n^{1/3}(v-\bar{t}_{\gamma}), \qquad \tilde{w} = n^{1/3}(w-\bar{t}_{\gamma}).
\end{equation*}
Clearly the steepest descent contour for $v$ from $\bar{t}_{\gamma}$ departs at an angle
of $\pm 2\pi /3$ whereas the contour for $w$ departs at angle $\pm \pi/3$. The $w$ contour
must lie to the right of the $v$ contour (so as to avoid the pole from $1/(w-v')$). As $n$
goes to infinity, neglecting the contribution away from the critical point, the point-wise
limit of the kernel becomes
\begin{equation}\label{Krkappa}
K_{r,\gamma}(\tilde{v},\tilde{v}') = \frac1{2\pi\I}\int \frac{1}{\tilde{v}-\tilde{w}}\frac{\exp\left\{\tfrac{-\bar{g}_{\gamma}}{6}\tilde{v}^3+ r \tilde{v}\right\}}{\exp\left\{\tfrac{-\bar{g}_{\gamma}}{6}\tilde{w}^3+ r \tilde{w}\right\}}\frac{d\tilde{w}}{\tilde{w}-\tilde{v}'}.
\end{equation}
where the kernel acts on the contour $e^{-2\pi\I/3}\rr_+\cup e^{2\pi\I/3}\rr_+$ (oriented
from negative imaginary part to positive imaginary part) and the integral in $\tilde{w}$
is on the (likewise oriented) contour $\{e^{-\pi\I/3}\rr_+ + \delta\} \cup \{e^{\pi\I/3}\rr_+ + \delta$ for any horizontal shift
$\delta>0$. This is owing to the fact that
\begin{equation*}
n^{-1/3}\frac{\pi}{\sin (\pi(v-w))} \to \frac{1}{\tilde{v}-\tilde{w}}, \qquad \frac{dw}{w-v'} \to \frac{d\tilde{w}}{\tilde{w}-\tilde{v}'}.
\end{equation*}
where the $n^{-1/3}$ came from the Jacobian associated with the change of variables in $v$ and $v'$.

Another change of variables to rescale by $(\tilde{g}_{\gamma}/2)^{1/3}$ results in
\begin{equation*}
K_{r,\gamma}(\tilde{v},\tilde{v}') = \frac1{2\pi\I}\int \frac{1}{\tilde{v}-\tilde{w}}\frac{\exp\left\{-\tilde{v}^3 /3+ (\bar{g}_{\gamma}/2)^{-1/3} r \tilde{v}\right\}}{\exp\left\{-\tilde{w}^3 /3 + (\bar{g}_{\gamma}/2)^{-1/3} r \tilde{w}\right\}}\frac{d\tilde{w}}{\tilde{w}-\tilde{v}'}.
\end{equation*}
One now recognizes that the Fredholm determinant of this kernel is one way to define the
GUE Tracy-Widom distribution (see for instance Lemma 8.6 in \cite{borodinCorwinFerrari}). This shows that the limiting expectation 
$p_{\gamma}(r)=
F_{{\rm GUE}}\left((\bar{g}_{\gamma} / 2)^{-1/3}r\right)$ which shows that it is a
continuous probability distribution function and thus Lemma \ref{problemma1} applies. This
completes the critical point derivation.

The challenge now is to rigorously prove that $\det(I+K_{u(n,r,\gamma)})$ converges to
$\det(I+K_{r,\gamma})$ where the two operators act on their respective $L^2$ spaces and
are defined with respect to the kernels above in \eqref{Kunrkappa} and \eqref{Krkappa}.

We will consider the case where $\gamma<\gamma^*$ for $\gamma^*$ small enough and perform
certain estimates given that assumption. Let us record some useful estimates: for $z$
close to zero we have ($g$ is being used to represent the Euler Mascheroni constant $g
=0.577...$)
\begin{eqnarray*}
\log\Gamma(z) &=& -\log z -g z + O(z^2)\\
\Psi(z) &=& -\frac{1}{z} -g + O(z)\\
\Psi'(z) &=& \frac{1}{z^2} + O(1)\\
\Psi''(z) &=& -\frac{2}{z^3} + O(1)\\
\Psi'''(z) &=& \frac{6}{z^4} + O(1).
\end{eqnarray*}
From this we can immediately observe that for $\gamma$ small,
\begin{eqnarray}
\bar{t}_{\gamma} &=& \gamma/2\\
\bar{f}_{\gamma} &=& 4\gamma^{-1} + 2g + O(\gamma)\\
\bar{g}_{\gamma} &=& 32 \gamma^{-3} + O(1).
\end{eqnarray}

With the change of variables $z= \gamma\tilde{z}$ we may estimate
\begin{equation}
  \begin{aligned}
    G(z)-G(\bar{t}_{\gamma}) &=\log\Gamma(z) - \log\Gamma(\gamma-z) + \bar{f}_{\gamma} (z -\gamma/2)
    +\kappa \bar{t}_{\kappa}^2/2 - \bar{f}_{\kappa} \bar{t}_{\kappa}\\
    &= f(\tilde{z}) + O(\gamma^2),
  \end{aligned}    \label{festimate}
\end{equation}
where the error is uniform for $\tilde{z}$ in any compact domain and
\begin{equation*}
f(\tilde{z}) = \log(1-\tilde z) - \log( \tilde z) + 4 \tilde z -2.
\end{equation*}

Our approach will be as follows: Step 1: We will deform the contour $C_{\delta_1}$ on
which $v$ and $v'$ are integrated as well as the contour on which $w$ is integrated so
that they both locally follow the steepest descent curve for $G(z)$ coming from the
critical point $\bar{t}_{\gamma}$ and so that along them there is sufficient global decay
to ensure that the integral localizes to the vicinity of $\bar{t}_{\gamma}$. Step 2: In
order to show this desired localization we will use a Taylor series with remainder
estimate in a small ball of radius approximately $\gamma$ around $\bar{t}_{\gamma}$, and
outside that ball we will use the estimate \eqref{festimate} for the $v$ and $v'$ contour,
and a similarly straightforward estimate for the $w$ contour. Step 3: Given these
estimates we can show convergence of the Fredholm determinants as desired.

\noindent {\bf Step 1:} Define the contour $C_{f}$ which corresponds to a steep descent
contour\footnote{For an integral $I=\int_\gamma dz\, e^{t f(z)}$, we say that $\gamma$ is
  a steep descent path if (1) $\Real(f(z))$ reaches the maximum at some $z_0\in\gamma$:
  $\Real(f(z))< \Real(f(z_0))$ for $z\in\gamma\setminus\{z_0\}$, and (2) $\Real(f(z))$ is
  monotone along $\gamma$ except at its maximum point $z_0$ and, if $\gamma$ is closed, at
  a point $z_1$ where the minimum of $\Real(f)$ is reached.} of the function $f(\tilde{z})$ leaving $\tilde{z}=1/2$
at an angle of $2\pi/3$ and returning to $\tilde{z}=1/2$ at an angle of $-2\pi/3$, given a
positive orientation. In particular we will take $C_{f}$ to be composed of a line segment
from $\tilde{z}=1/2$ to $\tilde{z}=1/2 + e^{ 2\pi\I/3}$, then a circular arc (centered at
0) going counter-clockwise until $\tilde{z}=1/2+e^{-2\pi\I/3}$ and then a line segment
back to $\tilde{z}=1/2$. It is elementary to confirm that along $C_f$, $\Real(f)$ achieves
its maximal value of 0 at the unique point $\tilde{z}=1/2$ and is negative everywhere
else. The contour $\gamma C_{f}$ will serve as our steep descent contour for $G(v)$ (see
Figure \ref{contour1}).

One should note that for any $r\in (0,1)$, for $\gamma$ small enough, the contour $\gamma C_{f}$ is such that for $v$ and $v'$ along it, $|v-v'|<r$. By virtue of this fact, we may employ Proposition \ref{TWprop1} to deform our initial contour $C_{\delta_1}$ to the contour $\gamma C_{f}$ without changing the value of the Fredholm determinant.

Likewise, we must deform the contour along which the $w$ integration is performed. Given that $v,v'\in \gamma C_{f}$ now, we may again use Proposition \ref{TWprop1} to deform the $w$ integration to a contour $C_{\langle,n}$ which is defined symmetrically over the real axis by a ray from $\bar{t}_{\gamma}+n^{-1/3}$ leaving at an angle of $\pi /3$ (again, see Figure \ref{contour1}). It is easy to see that this deformation does not pass any poles, and with ease one confirms that due to the linear term in $G(w)$ the deformation of the infinite contour is justified by suitable decay bounds near infinity between the original and final contours.

Thus, the outcome of this first step is that the $v,v'$ contour is now given by $\gamma
C_{f}$ and the $w$ contour is now given by $C_{\langle,n}$ which is independent of $v$.

\begin{figure}
\begin{center}
\includegraphics[width=.5\textwidth,height=.5\textwidth]{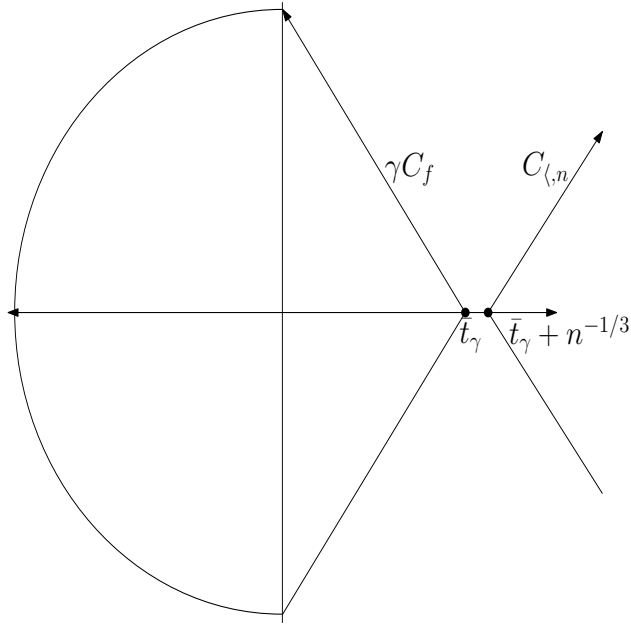}
\end{center}
\caption{Steep descent contours}\label{contour1}
\end{figure}

\noindent {\bf Step 2:} We will presently provide two types of estimates for our function $G$ along the specified contours: those valid in a small ball around the critical point and those valid outside the small ball. Let us first focus on the small ball estimate.

\begin{lem}\label{lem:smallball}
There exists $\gamma^*>0$ such that for all $\gamma<\gamma^*$ the following two facts hold:

\noindent(1) There exists a constant $c_1>0$ such that for all $v$ along the straight line segments of $\gamma C_{f}$:
\begin{equation*}
\Real\left[G(v)-G(\bar{t}_{\gamma})\right] \leq \Real\left[ -c_1\gamma^3 (v-\bar{t}_{\gamma})^3 \right].
\end{equation*}

\noindent(2) There exists a constant $c_2>0$ such that for all $w$ along the contour
$C_{\langle,n}$ at distance less than $\gamma$ from $\bar{t}_{\gamma}$:
\begin{equation*}
\Real\left[G(w)-G(\bar{t}_{\gamma})\right] \geq \Real\left[ -c_2\gamma^3 (w-\bar{t}_{\gamma})^3 \right].
\end{equation*}
\end{lem}
\begin{proof}
Recall the Taylor expansion remainder estimate for a function $F(z)$ expanded around $\bar{z}$,
\begin{equation}
  \left| F(z) - \left( F(\bar{z}) + F'(\bar{z})(z-\bar{z})+ \tfrac{1}{2} F''(\bar{z})(z-\bar{z})^2 + \tfrac{1}{6} F'''(\bar{z})(z-\bar{z})^3 \right)\right|
 \leq \max_{\zeta \in B(\bar{z},|z-\bar{z}|)} \tfrac{1}{24}|F''''(\zeta)||z-\bar{z}|^4.
\end{equation}
We may apply this to our function $G(z)$ around the point $\bar{t}_{\gamma}$ giving
\begin{equation*}
\left| G(z) - G(\bar{t}_{\gamma}) + \tfrac{1}{6} \bar{g}_{\gamma}(z-\bar{t}_{\gamma})^3\right| \leq \max_{\zeta\in B(\bar{t}_{\gamma},|z-\bar{t}_{\gamma}|)} \tfrac{1}{24}|G''''(\zeta)||z-\bar{t}_{\gamma}|^4.
\end{equation*}
Let $z=\gamma\tilde{z}$ and also let $\zeta=\gamma\tilde{\zeta}$. Then for $\gamma$ small, we have the following estimate
\begin{equation*}
\left| G(z) - G(\bar{t}_{\gamma}) + \tfrac{16}{3}(\tilde{z}-1/2)^3\right| \leq \frac{1}{4} \left|\frac{1}{\tilde z^3} - \frac{1}{(1-\tilde z)^4}\right| \left|\tilde z - 1/2 \right|^4 + O(\gamma^2).
\end{equation*}
Both parts of the lemma follow readily from this estimate and the comparison along the contours of interest of $\tfrac{16}{3}(\tilde{z}-1/2)^3$ with the right hand side above.
\end{proof}

We may now turn to the estimate outside the ball of size $\gamma$.
\begin{lem}\label{lem:circ}
There exists $\gamma^*>0$ and $c>0$ such that for all $v$ along the circular part of the contour $\gamma C_{f}$, the following holds:
\begin{equation*}
\Real\!\left[G(v)-G(\bar{t}_{\gamma})\right] \leq -c.
\end{equation*}
\end{lem}
\begin{proof}
Writing $v=\gamma\tilde{v}$ we may appeal to the second line of the estimate
\eqref{festimate} and the fact that along the circular part of the contour $\gamma C_{f}$,
the real part of function $f(\tilde v)$ is strictly negative and the error in the estimate is $O(\gamma)$.
\end{proof}

The above bound suffices for the $v$ contour since it is finite length. However, the $w$ contour is infinite so our estimate must be sufficient to ensure that the contour's tails do not play a role.
\begin{lem}\label{lem:line}
  There exists $\gamma^*>0$ and $c>0$ such that for all $w$ along the contour
  $C_{\langle,n}$ at distance exceeding $\gamma$ from $\bar{t}_{\gamma}$, the following
  holds:
\begin{equation*}
\Real\!\left[G(w)-G(\bar{t}_{\gamma})\right] \geq \Real\!\left[c\gamma^{-1} w\right].
\end{equation*}
\end{lem}
\begin{proof}
  This estimate is best established in three parts. We first estimate for $\zeta$ between
  distance $\gamma$ and distance $c_1 \gamma$ from $\bar{t}_{\gamma}$ ($c_1$
  large). Second we estimate for $w$ between distance $c_1 \gamma $ and distance $c_2$
  from $\bar{t}_{\gamma}$. Finally we estimate for all $w$ yet further. This third
  estimate is immediate from the first line of \eqref{festimate} in which the linear term
  in $z$ clearly exceeds the other terms for $\gamma$ small enough and $|z|>c_2$.

  To make the first estimate we use the bottom line of \eqref{festimate}. The function
  $f(\tilde{z})$ has sufficient growth along this contour to overwhelm the $O (\gamma^2)$
  error as long as $\gamma$ is small enough. The $O(\gamma^2)$ error in \eqref{festimate}
  is only valid for $\tilde{z}$ in a compact domain though. So, for the second estimate we
  must use the cruder bound that $G(z)-G(\bar{t}_{\gamma}) = f(\tilde{z}) +O(1)$ for $z$
  along the contour and of size less than $c_2$ from $\bar{t}_{\gamma}$. Since in this
  regime of $z$, $f(\tilde{z})$ behaves like $(z -\gamma/2)\bar{f}_{\gamma} = 4\gamma^{-1}
  z +O(1)$, one sees that for $\gamma$ small enough, the $O(1)$ error is overwhelmed and
  the claimed estimate follows.
\end{proof}

\noindent{\bf Step 3:} We now employ the estimates given above to conclude that
$\det(I+K_{u(n,r,\gamma)})$ converges to $\det(I+K_{r,\gamma})$ where the two operators
act on their respective $L^2$ spaces and are defined with respect to the kernels above in
\eqref{Kunrkappa} and \eqref{Krkappa}. The approach is standard (see for instance \cite{borodinCorwinFerrari, ACQ}) so we just briefly review
what is done. Convergence can either be shown at the level of Fredholm series expansion or
trace-class convergence of the operators. Focusing on the Fredholm series expansion, the
estimates provided by Lemmas \ref{lem:smallball}, \ref{lem:circ} and \ref{lem:line}, along
with Hadamard's bound, show that for any $\e$ there is a $k$ large enough such that for
all $n$, the contribution of the terms of the Fredholm series expansion past index $k$ can
be bounded by $\e$. This localizes the problem of asymptotics to a finite number of
integrals involving the kernels. The same estimates then show that these integrals can be
localized in a large window of size $n^{-1/3}$ around the critical point
$\bar{t}_{\gamma}$. The cost of throwing away the portion of the integrals outside this
window can be uniformly (in $n$) bounded by $\e$, assuming the window is large
enough. Finally, after a change of variables to rescale this window by $n^{1/3}$ we can
use the Taylor series with remainder to show that as $n$ goes to infinity, the integrals
coming from the kernel $K_{u(n,r,\gamma)}$ converge to those coming from
$K_{r,\gamma}$. This last step is essentially the content of the critical point
computation given earlier. 
\end{proof}

\section{Equivalence of contour integral and Fredholm determinants}\label{Sec:equiv}

Let us first recall a version of the Andr\'{e}ief identity \cite{And} (sometimes referred
to as the generalized Cauchy-Binet identity): Assuming all integrals exist,
\begin{equation}
\frac{1}{N!} \int \cdots \int d\mu(z_1)\cdots d\mu(z_N)
\det\left[f_i(z_j)\right]_{i,j=1}^{N} \det\left[g_i(z_j)\right]_{i,j=1}^{N}
= \det\left[\int d\mu(z)\, f_i(z)g_j(z) \right]_{i,j=1}^{N}.
\label{CauchyBinet}
\end{equation}

\begin{proof}[Proof of Theorem \ref{thm:equiv}]
  We start working from the right hand side of the formula. We regard the Fredhom
  determinant as given by its Fredholm series expansion (the convergence of the series is
  a consequence of the identity we will prove). Note that our assumptions on $\delta_1$
  and $\delta_2$ imply that $\delta_1<\frac12$, so the factors $\Gamma(v-a_m)$ have no
  poles for $v\in C_{\delta_1}$. They also imply that the factor $\sin(\pi(v-w))(w-v')$ in
  the denominator in \eqref{eq:defK} is never zero on $\lc_{\delta_2}$. To see that the
  integral is convergent, recall the asymptotics $|\Gamma(x+\I
  y)|e^{\pi/2|y|}|y|^{1/2-x}\to\sqrt{2\pi}$ as $y\to\pm\infty$ for $x,y\in\rr$ (see
  (6.1.45) in \cite{abrSteg}), which implies that, for $\Real(z)$ in some finite interval,
  \begin{equation}
    \label{eq:gammaBd}
    |\Gamma(z)|\geq c_1e^{-\frac{\pi}{2}|\!\im(z)|}|\!\im(z)|^{\eta}
  \end{equation}
  as $|\!\im(z)|\to\infty$ for some $c_1>0$ and $\eta\in\rr$. We also have, under the
  same assumption on $z$, $|\!\sin(\pi z)|\geq c_2e^{\pi|\!\im(z)|}$ for some $c_2>0$.
  Then
  \begin{equation}
    \left|\frac\pi{\sin(\pi(v-w))}\frac{F(w)}{F(v)}\frac1{w-v'}\right|\prod_{m=1}^N\frac{\Gamma(v-a_m)}{\Gamma(w-a_m)}
    \leq c_3e^{\pi(\frac{N}{2}-1)|\!\im(w)|}|\!\im(w)|^{-N\eta-1}|F(w)|\label{eq:decay}
  \end{equation}
  as $|\!\im(w)|\to\infty$ for some $c_3>0$, and hence the integral in \eqref{eq:defK} is convergent by \eqref{eq:decayAssum}.


  Observe that $K=AB$ with $A\!:L^2(\lc_{\delta_2})\longrightarrow L^2(C_{\delta_1})$ and
  $B\!:L^2(C_{\delta_1})\longrightarrow L^2(\lc_{\delta_2})$ given by
  \[A(v,w)=\frac\pi{\sin(\pi(v-w))}\frac{F(w)}{F(v)}\prod_{m=1}^N\frac{\Gamma(v-a_m)}{\Gamma(w-a_m)}
  \qqand B(w,v')=\frac1{w-v'}\] (checking that $A$ and $B$ map $L^2(\lc_{\delta_2})$ to
  $L^2(C_{\delta_1})$ and vice-versa is easy, and uses \eqref{eq:decayAssum} in the case
  of $A$). Let $\wt K=BA\!:L^2(\lc_{\delta_2})\longrightarrow L^2(\lc_{\delta_2})$, which
  is then given by
  \[\wt
  K(w,w')=\frac1{2\pi\I}\oint_{C_{\delta_1}}dv\,\frac{1}{w-v}\frac\pi{\sin(\pi(v-w'))}\frac{F(w')}{F(v)}\prod_{m=1}^N\frac{\Gamma(v-a_m)}{\Gamma(w'-a_m)}.\]
  Now recall in general that if $A$ and $B$ have integral kernels so that the integrals
  $\int dw\,A(v,w)B(w,v')$ and $\int dv\,B(w,v)A(v,w')$ both converge for $v,v',w,w'$ in
  the right domains, then $\det(I+AB)=\det(I+BA)$ in the sense that the two convergent Fredholm
  expansions coincide termwise (this can be seen as an application of the Andr\'{e}ief
  identity \eqref{CauchyBinet}). Using this we deduce that
  \begin{equation}
    \label{eq:detKwtK}
    \det(I+K)_{L^2(C_{\delta_1})}=\det(I+\wt K)_{L^2(\lc_{\delta_2})}.
  \end{equation}

  Recalling that $\Gamma(1+s)=s\Gamma(s)$, let
  \begin{equation}
    G(v)=F(v)\prod_{m=1}^N\frac{v-a_m}{\Gamma(v-a_m+1)}\label{eq:defG}
  \end{equation}
  and rewrite $\wt K$ as
  \[\wt K(w,w')=\frac1{2\pi\I}\oint_{C_{\delta_1}}dv\,\frac{1}{w-v}\frac\pi{\sin(\pi(v-w'))}\frac{G(w')}{G(v)}.\]
  By the assumption $\delta_1<\min\{\delta_2,1-\delta_2\}$, $w-v$ and $\sin(\pi(v-w'))$
  are never zero for $v\in C_{\delta_1}$ and $w,w'\in \lc_{\delta_2}$. Thus the only poles
  of the integrand inside $C_{\delta_1}$ are those coming from $G(v)^{-1}$, which can only
  come from the factors $(v-a_m)$ in the definition of $G(v)$. For simplicity we will assume
  that all the $a_i$'s are pairwise distinct, the general case follows likewise by taking limits.
  Then the integrand has simple poles at $v=a_i$, $i=1,\dotsc,N$, and thus
  \begin{align*}
    \wt K(w,w')&=\frac1{2\pi\I}\sum_{i=1}^NG(w')\Res{v=a_i}\!\left[\frac{1}{w-v}\frac\pi{\sin(\pi(v-w'))}\frac1{G(v)}\right]\\
    &=\frac1{2\pi\I}\sum_{i=1}^NG(w')\frac{1}{w-a_i}\frac\pi{\sin(\pi(a_i-w'))}\frac1{F(a_i)}\prod_{m\neq i}\frac{\Gamma(a_i-a_m+1)}{a_i-a_m}.
  \end{align*}
  We rewrite this as
  \[\wt K(w,w')=\frac1{2\pi\I}\sum_{i=1}^Nf_i(w)g_i(w')\]
  with
  \begin{equation}
    f_i(w)=\frac{1}{w-a_i},\quad g_i(w')=C_i G(w')\frac\pi{\sin(\pi(a_i-w'))}\qand
    C_i=\frac1{F(a_i)}\prod_{m\neq i}\Gamma(a_i-a_m).\label{eq:deffgalpha}
  \end{equation}
  In particular, this means that $\wt K=\wt A\wt B$ where $\wt A\!:\ell^2(\{1,\dotsc,N\})\longrightarrow
  L^2(\lc_{\delta_2})$ is given by $\wt A(w,i)=(2\pi\I)^{-1}f_i(w)$ and
  $\wt B\!:L^2(\lc_{\delta_2})\longrightarrow\ell^2(\{1,\dotsc,N\})$ by $\wt B(i,w')=g_i(w')$, so that
  \[\wt B\wt A(i,j)=\frac1{2\pi\I}\int_{\lc_{\delta_2}}dw\,f_i(w)g_i(w).\]
  The integral is convergent for the same reason as the integral in \eqref{eq:defK},
  and hence using again the formula $\det(I+AB)=\det(I+BA)$ we deduce that
  \begin{equation}
    \label{eq:wtKwtAwtB}
    \det(I+\wt K)_{L^2(\lc_{\delta_2})}=\det\!\left[\uno{i=j}+\frac1{2\pi\I}\int_{\lc_{\delta_2}}dw\,f_i(w)g_j(w)\right]_{i,j=1}^N.
  \end{equation}

  Our next goal is to shift the contour $\lc_{\delta_2}$ in the integral appearing on the
  right hand side of \eqref{eq:wtKwtAwtB} to $-\lc_{\delta_1}$. Note that, as we do this, we
  will cross all the $a_i$'s. We have
  \[f_i(w)g_j(w)=F(w)\frac{C_j\pi}{\sin(\pi(a_j-w))}\frac1{\Gamma(w-a_i+1)}
  \prod_{m\neq i}\frac1{\Gamma(w-a_m)}.\] Since all the poles of $F$ lie to the right of
  $\lc_{\delta_2}$, the only singularities of $f_i(w)g_j(w)$ we encounter as we shift the
  contour are those coming from the zeros of the sine, and hence we only see simple poles
  at $w=a_i$ in the case $i=j$. On the other hand, the integral of $f_i(w)g_j(w)$ along
  the segments going from $-\delta_1\pm\I M$ to $\delta_2\pm\I M$ goes to 0 as
  $|M|\to\infty$ by \eqref{eq:decayAssum} and \eqref{eq:decay} as before. The conclusion
  is that
  \[\frac1{2\pi\I}\int_{\lc_{\delta_2}}dw\,f_i(w)g_j(w)
  =\uno{i=j}\Res{w=a_i}\!\left[f_i(w)g_i(w)\right]+\frac1{2\pi\I}\int_{-\lc_{\delta_1}}dw\,f_i(w)g_j(w).\]
  Since
  \[\Res{w=a_i}\!\left[f_i(w)g_i(w)\right]=-C_iF(a_i)\prod_{m\neq
    i}\frac{a_i-a_m}{\Gamma(a_i-a_m+1)}=-1,\]
  we deduce from \eqref{eq:wtKwtAwtB} that
  \begin{equation}
    \label{eq:newDetwtK}
    \begin{split}
      \det(I+\wt K)_{L^2(\lc_{\delta_1})}&=\det\!\left[\frac1{2\pi\I}\int_{-\lc_{\delta_1}}dw\,f_i(w)g_j(w)\right]_{i,j=1}^N\\
      &=\frac1{(2\pi\I)^NN!}\int_{-\lc_{\delta_1}}\!\dotsm\int_{-\lc_{\delta_1}}dw_1\dotsm
      dw_N\,\det[f_i(w_j)]_{i,j=1}^N\det[g_i(w_j)]_{i,j=1}^N,
    \end{split}
  \end{equation}
  where the second equality follows from the Andr\'{e}ief identity \eqref{CauchyBinet}.

  Note that the contours in this last integral are exactly the ones appearing in the
  formula we seek to prove. Hence what is left is to compute the two determinants and
  show that the integrand coincides with the one appearing in \eqref{eq:equiv}. We start
  by observing that $\det[f_i(w_j)]_{i,j=1}^N$ is a Cauchy determinant, so
  \begin{equation}
    \label{eq:fiwj}
    \det[f_i(w_j)]_{i,j=1}^N=\frac{\prod_{i<j}(a_j-a_i)(w_i-w_j)}{\prod_{i,j=1}^N(w_i-a_j)}.
  \end{equation}
  On the other hand
  \[\det[g_i(w_j)]_{i,j=1}^N=\prod_{i=1}^NG(w_i)C_i\det\!\left[\frac\pi{\sin(\pi(a_i-w_j))}\right]_{i,j=1}^N.\]
  The determinant on the right hand side can be turned into another Cauchy determinant by
  writing
  \[\frac\pi{\sin(\pi(a_i-w_j))}=\frac{2\pi\I}{e^{-\I\pi(a_i+w_j)}}\frac1{e^{2\I\pi
      a_i}-e^{2\I\pi w_j}},\]
  so that
  \[\det\!\left[\frac\pi{\sin(\pi(a_i-w_j))}\right]_{i,j=1}^N=\prod_{i=1}^N\frac{2\pi\I}{e^{-\I\pi(a_i+w_i)}}
    \det\!\left[\frac1{e^{2\I\pi a_i}-e^{2\I\pi w_j}}\right]_{i,j=1}^N,\]
 and therefore
 \begin{equation}
   \label{eq:giwj}
   \det[g_i(w_j)]_{i,j=1}^N=(2\pi\I)^Ne^{\I\pi\sum_i(a_i+w_i)}\prod_{i=1}^NG(w_i)C_i
   \frac{\prod_{i<j}(e^{2\I\pi a_j}-e^{2\I\pi a_i})(e^{2\I\pi w_i}-e^{2\I\pi w_j})}{\prod_{i,j=1}^N(e^{2\I\pi a_i}-e^{2\I\pi w_j})}.
 \end{equation}
 Now we collect the factors involving only $w_i$'s, only $a_i$'s, and cross-terms: from
 \eqref{eq:fiwj}, \eqref{eq:giwj} and the definitions \eqref{eq:defG} and
 \eqref{eq:deffgalpha} of $G(w_i)$ and $C_i$ we get
 \begin{equation}
   \label{eq:detdet}
   \det[f_i(w_j)]_{i,j=1}^N\det[g_i(w_j)]_{i,j=1}^N=(2\pi\I)^ND_{a,a}D_{w,w}D_{a,w}
 \end{equation}
 with
 \begin{align}
   D_{a,a}&=e^{\I\pi\sum_ia_i}\prod_{i<j}(a_j-a_i)(e^{2\I\pi a_i}-e^{2\I\pi
     a_j})\prod_{i=1}^N\frac{\prod_{m\neq i}\Gamma(a_i-a_m)}{F(a_i)},\\
   D_{w,w}&=e^{\I\pi\sum_iw_i}\prod_{i=1}^NF(w_i)\prod_{i<j}(w_i-w_j)(e^{2\I\pi w_i}-e^{2\I\pi
     w_j}),\\
   D_{a,w}&=\prod_{i,j=1}^N\frac1{w_i-a_j}\frac1{\Gamma(w_i-a_j)}\frac1{e^{2\I\pi a_i}-e^{2\I\pi
     w_j}}.
 \end{align}
$D_{a,a}$ can be rearranged in the following way:
\begin{align}
  D_{a,a}&=e^{\I\pi\sum_ia_i}\prod_{i<j}(a_j-a_i)e^{\I\pi(a_i+a_j)}(e^{\I\pi(a_i-a_j)}-e^{\I\pi
     (a_j-a_i)})\frac{\prod_{i\neq j}\Gamma(a_i-a_j)}{\prod_{i=1}^NF(a_i)}\\
   &=e^{\I\pi N\sum_ia_i}(2\I)^{\frac12N(N-1)}\prod_{i<j}\Gamma(a_j-a_i+1)\Gamma(a_i-a_j)\sin(\pi(a_j-a_i))\prod_{i=1}^NF(a_i)^{-1}\\
   &=e^{\I\pi N\sum_ia_i}(-2\pi\I)^{\frac12N(N-1)}\prod_{i=1}^NF(a_i)^{-1},
\end{align}
where have used
\begin{equation*}
 \Gamma(-s)\Gamma(1+s)=\frac\pi{\sin(-\pi s)}.
\end{equation*}
Similar computations give
\begin{align}
  D_{w,w}&=e^{\I\pi
    N\sum_iw_i}(2\pi\I)^{\frac12N(N-1)}\prod_{i<j}\frac{\sin(\pi(w_i-w_j))}{\pi}(w_i-w_j)\prod_{i=1}^NF(w_i),\\
  D_{a,w}&=e^{-\I\pi N\sum_i(a_i+w_i)}(2\pi\I)^{-N^2}\prod_{i,j=1}^N\Gamma(a_j-w_i).
\end{align}
Hence
\begin{align}
  D_{a,a}D_{w,w}D_{a,w}&=\frac{1}{(2\pi\I)^N}(-1)^{\frac12N(N-1)}\prod_{i<j}\frac{\sin(\pi(w_i-w_j))}{\pi}(w_i-w_j)
  \prod_{i,j=1}^N\Gamma(a_j-w_i)\prod_{i=1}^N\frac{F(w_i)}{F(a_i)}\\
  &=\frac{1}{(2\pi\I)^N}\prod_{i\neq j}\frac1{\Gamma(w_i-w_j)}\prod_{i=1}^N\frac{F(w_i)}{F(a_i)},
\end{align}
and thus from \eqref{eq:newDetwtK} and \eqref{eq:detdet} we deduce that
\[\det(I+\wt K)_{L^2(\lc_{\delta_1})}
=\frac1{(2\pi\I)^NN!}\int_{-\lc_{\delta_1}}\!\dotsm\int_{-\lc_{\delta_1}}dw_1\dotsm
dw_N\,\prod_{i\neq
  j}\frac1{\Gamma(w_i-w_j)}\prod_{i,j=1}^N\Gamma(a_j-w_i)\prod_{i=1}^N\frac{F(w_i)}{F(a_i)}.\]
Putting this together with \eqref{eq:detKwtK} and the definition \eqref{eq:skly} of $s_N$
yields \eqref{eq:equiv}.
\end{proof}


\end{document}